\documentclass[a4paper,10pt,reqno]{amsart}

\usepackage[utf8]{inputenc}
\usepackage[T1]{fontenc}
\usepackage{amsthm}
\usepackage{amsmath}
\usepackage{amssymb}
\usepackage[inline]{enumitem}
\usepackage{comment}
\PassOptionsToPackage{hyphens}{url}\usepackage{hyperref}
\usepackage{fancyhdr}
\usepackage{mathrsfs}
\usepackage{stmaryrd}
\usepackage[normalem]{ulem}
\usepackage{xcolor}

\setlist[description]{%
  itemsep=0.05cm,            
  font={\normalfont\textsc}, 
  leftmargin=\parindent,
  labelindent=\parindent
}

\theoremstyle{definition}

\newtheorem{theorem}{Theorem}[section]
\newtheorem{lemma}[theorem]{Lemma}
\newtheorem{proposition}[theorem]{Proposition}

\theoremstyle{definition}

\newtheorem{conjecture}[theorem]{Conjecture}

\newtheorem{remark}[theorem]{Remark}

\newtheorem*{question*}{Question}

\newtheoremstyle{theoremdd}
  {\topsep}
  {\topsep}
  {\normalfont}
  {0pt}
  {\scshape}
  {:}
  { }
  {\indent\thmname{#1}\thmnumber{ #2}\textnormal{\thmnote{ (#3)}}}

\theoremstyle{theoremdd}
\newtheorem{claimx}{Claim}

\definecolor{blue-url}{RGB}{0,0,100}
\definecolor{red-url}{RGB}{100,0,0}
\definecolor{green-url}{RGB}{0,100,0}
\definecolor{light-yellow}{RGB}{255,255,128}
\definecolor{light-blue}{RGB}{193,255,255}
\definecolor{light-red}{RGB}{239,83,80}

\hypersetup{
	pdftitle={On the automorphisms of the power semigroups of a numerical semigroup},
	pdfauthor={},
	pdfmenubar=false,
	pdffitwindow=true,
	pdfstartview=FitH,
	colorlinks=true,
	linkcolor=blue-url,
	citecolor=green-url,
	urlcolor=red-url
}

\renewcommand{\qedsymbol}{$\blacksquare$}

\renewcommand{\setminus}{\smallsetminus}
\renewcommand{\,}{\kern 0.1em}

\providecommand\llb{\llbracket}
\providecommand\rrb{\rrbracket}

\providecommand\aut{{\rm Aut}}

\newcommand{\evid}[1]{\textsf{#1}}
\newcommand{\fin}{\mathrm{fin}}

\newlist{mydescription}{description}{1}
\setlist[mydescription]{
  leftmargin=2pt,    
  labelwidth=1cm,    
  itemindent=1.2cm,   
  align=left  
}

\begin{document}
\title{On the automorphisms \\ of the power semigroups of a numerical semigroup}
\author{Salvatore Tringali}
\address{(S.~Tringali) School of Mathematical Sciences, Hebei Normal University | Shijiazhuang, Hebei province, 050024 China}
\email{salvo.tringali@gmail.com}

\author{Kerou Wen}
\address{(K.~Wen) School of Mathematical Sciences, Hebei Normal University | Shijiazhuang, Hebei province, 050024 China}
\email{kerou.wen.hebnu@outlook.com}

\subjclass[2020]{Primary 08A35, 11P99. Secondary 20M13.}

\keywords{Automorphism, automorphism group, numerical monoid, power monoid, sumset.}

\begin{abstract}
If $H$ is a numerical semigroup (that is, a cofinite subset of the
non-negative integers closed under addition), then the non-empty subsets of $H$ form a
semigroup $\mathcal P(H)$ under the sumset operation induced by addition in $H$.
Moreover, if $0 \in H$, then $\mathcal P(H)$ is a monoid with identity element
$\{0\}$, and the family $\mathcal P_0(H)$ of all subsets of $H$ containing $0$ is a
submonoid of $\mathcal P(H)$.

We show that the automorphism group of $\mathcal P(H)$ is trivial, and the same holds
for $\mathcal P_0(H)$ when $0 \in H$. 
The proofs blend ideas from combinatorics and semigroup theory.
\end{abstract}

\maketitle

\thispagestyle{empty}

\section{Introduction}
\label{sec:intro}

Let $H$ be a multiplicatively written semigroup (see the end of this section for notation and terminology). Endowed with the (binary) operation
of setwise multiplication induced by $H$, the collection of all
\textit{non-empty} subsets of $H$ forms a semigroup in its own right, hereinafter
denoted by $\mathcal P(H)$. The family $\mathcal P_{\fin}(H)$ of all non-empty
\textit{finite} subsets of $H$ is a sub\-semi\-group of $\mathcal P(H)$. More
explicitly, the operation is defined by
\[
XY := \{xy \colon x \in X,\, y \in Y\},
\qquad\text{for all } X, Y \in \mathcal P(H).
\]
We call $\mathcal P(H)$ and
$\mathcal P_{\fin}(H)$ the \evid{large power semigroup} and the \evid{finitary power semigroup} of $H$, respectively.

If, in particular, $H$ is a monoid with neutral element $1_H$, then $\mathcal P(H)$ and $\mathcal P_\fin(H)$ are themselves monoids, their
neutral element being the singleton $\{1_H\}$. Furthermore, the families
\[
\mathcal P_1(H) := \{X \subseteq H : 1_H \in X\}
\qquad\text{and}\qquad
\mathcal P_{\fin,1}(H) := \mathcal P_1(H) \cap \mathcal P_\fin(H)
\]
are submonoids of $\mathcal P(H)$ and $\mathcal P_\fin(H)$, respectively.
We refer to $\mathcal P_1(H)$ and $\mathcal P_{\fin,1}(H)$ as the \evid{reduced large power monoid} and the \evid{reduced finitary power monoid} of $H$, respectively.

\begin{remark}\label{rem:notation}
If the semigroup $H$ is written additively, we adopt the same convention for any subsemigroup of its large power
sem\-i\-group $\mathcal P(H)$. This amounts, in practice, to the fact that the operation
on $\mathcal P(H)$ maps a pair $(X,Y)$ of non-empty subsets of $H$ to their
\evid{sumset}
\[
X+Y := \{x+y : x \in X,\; y \in Y\},
\]
as opposed to the setwise product of $X$ by $Y$ that would be used in the multiplicative setting.
\end{remark}

The systematic study of power semigroups began in the late 1960s with the work
of Tamura and Shafer \cite{Tam-Sha1967, Shaf-1967}, although their earliest
explicit appearance in the literature dates back at
least to a 1950 paper by Ballieu \cite{bal-1950}. After a period of declining
activity, power semigroups resurfaced in a 2018 paper by Fan and Tringali
\cite{Fa-Tr18} and have since been investigated from several new perspectives,
revealing significant links with other areas, from arithmetic
combinatorics
\cite{Bie-Ger-22, Tri-Yan2025(b), Tri-Yan2025(a), Li-Tri2025, Rago26(b), Rago26}
to factorization theory
\cite{An-Tr18, GarSan-Tri2025(a), Cos-Tri2025, Gonz-Li-Rabi-Rodr-Tira-2025,
Agga-Gott-Lu-arXiv-21-12, Dani-et-al2025, Tri-2026(a)}. In recent years, special attention has been devoted to the automorphisms of
these structures.

For more details, let a \evid{numerical semigroup} be a cofinite subsemigroup $H$
of $\mathbb N$ (the additive monoid of non-negative integers). If $0 \in H$, we call $H$ a \evid{numerical monoid}.
Being among the simplest examples of finitely generated cancellative commutative semigroups,
numerical semigroups have been the subject of extensive research: they are intertwined with
classical questions in number theory (e.g., the Frobenius coin problem); they play an
important role in commutative algebra and algebraic geometry (e.g., in connection
with the study of coordinate rings); and they provide a
natural testing ground for conjectures and techniques in factorization theory and
semigroup theory. For additional information, the reader may consult the monograph
\cite{Ass-DAn-GarSan2020}, where, however, the term ``numerical semigroup''
is reserved for what we call a numerical monoid.

In the aftermath of their proof \cite[Theorem 2.5]{Tri-Yan2025(a)} of a conjecture
by Bienvenu and Geroldinger \cite[Conjecture 4.7]{Bie-Ger-22}, Tringali and Yan have established \cite[Theorem~3.2]{Tri-Yan2025(b)}
that the automorphism group of the reduced finitary power monoid $\mathcal P_{\fin,0}(\mathbb N)$ of the additive monoid of non-negative integers
is cyclic of order two.
As a follow-up,
Tringali and Wen \cite[Theorem~3.5]{Tri-Wen-2026(a)} have shown that the
automorphism group of the finitary power monoid of $\mathbb Z$
(the additive group of integers) is isomorphic to the direct product of a cyclic
group of order two and the infinite dihedral group, in stark contrast with the
fact that the only non-trivial automorphism of the same group is
the map $x \mapsto -x$. 
In a remarkable feat, Rago \cite[Theorem 13]{Rago26(b)} has subsequently proved that, for a finite abelian group $G$,
the automorphism group of $\mathcal P_{\fin,1}(G)$ is (naturally) isomorphic to $\aut(G)$,
except when $G$ is the Klein four-group, in which case $\aut(\mathcal P_{\fin,1}(G))$ is
isomorphic to the direct product of two copies of the symmetric group of degree three \cite[Example 2]{Rago26(b)}. Lastly, Wong et al.~\cite[Theorem 1.3]{Wong2025} have recently established that the automorphism group of the finitary power semigroup $\mathcal P_\fin(S)$ of a numerical semigroup $S$ is trivial unless $S = \llb k, \infty \rrb$ for some $k \in \mathbb N$, in which case $\aut(\mathcal P_\fin(S))$ is cyclic of order $2$. 

The present article adds to this line of research. One of its main novelties is
that we investigate, for the first time, automorphisms of \textit{infinitary} power
semigroups, that is, power semigroups involving both finite and infinite sets.
This setting gives rise to many new challenges, as several arguments (e.g., of a combinatorial nature) that work
in the finitary framework break down in a critical way.

\subsection*{Plan of the paper} Sec.~\ref{sect:02} contains some preliminaries that may be of independent
interest. In particular, we obtain in Theorem~\ref{thm:restriction-from-P1-to-Pfin1} that, for all \textit{cancellative} monoids $H$ and $K$, every isomorphism
from $\mathcal P_1(H)$ to $\mathcal P_1(K)$ restricts to an isomorphism from
$\mathcal P_{\fin,1}(H)$ to $\mathcal P_{\fin,1}(K)$. Furthermore, we show in Theorem~\ref{thm:restriction-from-P-to-Pfin} that, for all cancellative
\textit{commutative} semigroups $H$ and $K$, every isomorphism from
$\mathcal P(H)$ to $\mathcal P(K)$ restricts to an isomorphism from
$\mathcal P_\fin(H)$ to $\mathcal P_\fin(K)$. 


Secs.~\ref{sec:03} and \ref{sec:04} are devoted to our main results. We use Theorem~\ref{thm:restriction-from-P1-to-Pfin1} to show that the
automorphism group of the reduced large power monoid of a numerical monoid is
trivial (Theorem~\ref{thm:aut(P_0(H))}), and Theorem~\ref{thm:restriction-from-P-to-Pfin}
to show that the same holds for the large power semigroup of a numerical
semigroup (Theorem~\ref{thm:aut(P(S))-is-trivial}).

Finally, Sec.~\ref{sec:future} offers some directions for future research. Most
notably, we propose a couple of conjectures on how to extend the results of the present work far beyond the numerical setting.

The proofs blend ideas from combinatorics and semigroup theory, and we have striven to make them mostly self-contained. The only exceptions are
Lemma~\ref{(0,x) fixed} and
Lemma~\ref{lem:two-element-sets-are-fixed}.

\subsection*{Notation and terminology.} We use $\mathbb{N}$ for the additive monoid of non-negative integers, and $\mathbb{Z}$ for
the additive group of integers. For $a,b \in \mathbb{Z} \cup \{\pm \infty\}$, we let
$
\llb a, b \rrb := \{x \in \mathbb{Z} : a \le x \le b\}$ and $
\mathbb{Z}_{\ge a} := \llb a, +\infty \rrb$.

The \evid{Frobenius number} of a numerical semigroup $H$ is the maximum element of
the complement of $H$ in $\mathbb{Z}$ (this maximum exists because $H$ is, by
definition, a cofinite subset of $\mathbb{N}$).

We refer to \cite{Ho95} for background on semigroups and monoids. In particular,
we denote by $\aut(H)$ the automorphism group of a semigroup $H$ and, unless $H$
is a numerical semigroup or the power semigroup of a numerical semigroup (see also
Remark~\ref{rem:notation}), we write $H$ multiplicatively. An element $a \in H$ is
\evid{right} (resp., \evid{left}) \evid{cancellative} if the map
$H \to H \colon x \mapsto xa$ (resp., $H \to H \colon x \mapsto ax$) is injective.
The semigroup itself is \evid{cancellative} if each of its elements is right and
left cancellative.

Further notation and terminology, if not defined, is standard or should be clear
from the context. In particular, every homomorphism throughout this work is a \textit{semigroup} homomorphism (even if its domain and codomain are monoids), unless otherwise specified.

\section{Preliminary results}
\label{sect:02}

Below, we collect some preliminary results that will be needed in Secs.~\ref{sec:03} and \ref{sec:04} to prove the main theorems of the paper.
We begin with a couple of lemmas
implying that any isomorphism between the reduced large power monoids of two cancellative monoids restricts to an isomorphism between the corresponding reduced finitary power monoids (Theorem \ref{thm:restriction-from-P1-to-Pfin1}). 

\begin{lemma} \label{lem:AX=B_has_finitely_many_solutions}
Let $A$ and $B$ be non-empty subsets of a semigroup $H$. If $A$ contains a right cancellative element $a \in H$ and $B$ is finite, then the equation $XA = B$ has finitely many solutions $X$ over $\mathcal P(H)$.
\end{lemma}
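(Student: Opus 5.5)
The idea is to show that every solution $X$ is confined to a fixed finite set, after which counting its subsets finishes the job. Suppose $X \in \mathcal P(H)$ satisfies $XA = B$. Since $a \in A$, we have $Xa \subseteq XA = B$, so for every $x \in X$ the product $xa$ lies in $B$. Because $a$ is right cancellative, the map $\rho_a \colon H \to H \colon x \mapsto xa$ is injective. Hence $X \subseteq \rho_a^{-1}(B)$, and $|\rho_a^{-1}(B)| \le |B|$, since each element of $B$ has at most one preimage under $\rho_a$.

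It follows that every solution $X$ is a non-empty subset of the finite set $F := \rho_a^{-1}(B)$, whose cardinality is at most $|B|$. So the number of solutions is at most the number of non-empty subsets of $F$, that is, at most $2^{|B|} - 1$, which is finite. This proves the lemma. If $F$ happens to be empty, there are simply no solutions, which is still consistent with the claim.

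No step here is a real obstacle. The only point needing care is the side on which $a$ acts. The equation is $XA = B$, so $a$ multiplies $x$ on the right, and right cancellativity (injectivity of $x \mapsto xa$) is exactly the hypothesis required. No commutativity is used.
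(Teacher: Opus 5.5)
Your proof is correct and follows essentially the same route as the paper: both note that $Xa \subseteq XA = B$ and then use right cancellativity of $a$ to bound the number of solutions by the number of subsets of a finite set. The only cosmetic difference is that the paper uses injectivity of $X \mapsto Xa$ on $\mathcal P(H)$ (that is, right cancellativity of $\{a\}$ in $\mathcal P(H)$, for which it cites a reference), whereas you work elementwise with $x \mapsto xa$ on $H$, which makes that step self-contained.
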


\begin{proof}
If $XA = B$ for some $X \in \mathcal P(H)$, then $Xa \subseteq B$. On the other hand, the map $
\mathcal P(H) \to P(H) \colon X \mapsto Xa$ is injective, because $a$ is a right cancellative element of $H$, and hence $\{a\}$ is right cancellative in $\mathcal P(H)$ (see \cite[Remark~1]{Tri-2025(c)}
for details, if necessary). Since $B$ is finite (by hypothesis) and a finite set has finitely many subsets, it follows that the equation $XA = B$ has finitely many solutions $X$ over $\mathcal P(H)$.
\end{proof}

\begin{lemma}\label{XA=A^n}
Let $H$ be a monoid, $A$ be a subset of $H$ containing the identity $1_H$, and $n$ be an integer greater than $2$. Then the equation $XA=A^n$ has at least $2^{|A|-1}$ solutions $X$ over $\mathcal P_1(H)$.
\end{lemma}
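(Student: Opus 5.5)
We use the standard "sandwich" construction. Since $1_H \in A$, the powers form an increasing chain
\[
A \subseteq A^2 \subseteq \cdots \subseteq A^{n-1},
\]
because $A^k = A^k \cdot 1_H \subseteq A^{k+1}$. We may assume $A$ is finite: if $A$ is infinite, then $2^{|A|-1}$ is infinite, and the construction below produces infinitely many distinct solutions (one for each subset of $A \setminus \{1_H\}$).

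The candidate solutions are the sets
\[
X_S := A^{n-1} \cup S, \qquad S \subseteq A^n \setminus A^{n-1},
\]
or, more simply, sets $X$ with $A^{n-1} \subseteq X \subseteq A^n$. For any such $X$ we have
\[
A^n = A^{n-1}A \subseteq XA \subseteq A^nA = A^{n+1},
\]
which is not quite what we need, since the upper bound is $A^{n+1}$ rather than $A^n$. So the plan is to take instead
\[
A^{n-2} \cdot \{1_H\} \subseteq X \subseteq A^{n-1},
\]
more precisely $X$ with $A^{n-2} \subseteq X \subseteq A^{n-1}$. Then
\[
A^{n-1} = A^{n-2}A \subseteq XA \subseteq A^{n-1}A = A^n,
\]
which is still not equality, so this sandwich needs adjusting as well.

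The adjustment is to squeeze with respect to the last factor. Take $X$ with $A^{n-1} \subseteq X \subseteq A^n$ and a different right factor. Since the equation is fixed as $XA = A^n$, the right move is to require $X \subseteq A^{n-1}$, which gives $XA \subseteq A^n$, and to require that $X$ contain enough to generate all of $A^n$. The natural choice is
\[
X = A^{n-2} \cup \bigl(A^{n-2}\cdot T\bigr)\quad\text{or simply}\quad X = A^{n-2}T \cup A^{n-2},
\]
where $T$ is a subset of $A$ with $1_H \in T$. This gives $X \subseteq A^{n-1}$ and $X \supseteq A^{n-2}$, hence $A^{n-1} \subseteq XA \subseteq A^n$; but it still does not yield equality in general.

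A cleaner route is the following. For each $a \in A$, the element $a^{n-1}$ is needed only to produce products that end up in $A^n$ anyway. Concretely, we look at sets of the form
\[
X = A^{n-2} \cup \{\text{any subset } S \text{ of } A^{n-1}\} \cup A^{n-2}\cdot\{\text{stuff}\}.
\]
I expect this step to be the main obstacle: it is not clear that such an $X$ recovers all of $A^n$.

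The more promising plan is to reverse roles. Every $X$ with $A^{n-1} \subseteq X \subseteq A^n$ satisfies $XA \supseteq A^n$, so I would look at the map $X \mapsto XA$ on the interval $[A^{n-2}, A^{n-1}]$. Here
\[
A^{n-1}A = A^n \quad\text{and}\quad A^{n-2}A = A^{n-1},
\]
so we need the elements that are added on top of $A^{n-2}$ to cover $A^n \setminus A^{n-1}$. The key observation is that
\[
A^n = A^{n-1}A = \bigl(A^{n-2}\cup A^{n-2}(A\setminus\{1_H\})\bigr)A,
\]
and that $A^{n-1} = A^{n-2}\{1_H\} \cup \bigcup_{a \ne 1_H} A^{n-2}a$.

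This suggests taking, for each subset $S \subseteq A \setminus \{1_H\}$,
\[
X_S := A^{n-1} \setminus \{\text{chosen elements } a^{n-1} : a \in S\},
\]
and checking that $X_S A = A^n$. This holds because the removed element $a^{n-1}$ is regenerated in the product as $a^{n-2}\cdot a$, where $a^{n-2} \in X_S$ since $n \ge 3$ (this is exactly where the hypothesis $n > 2$ enters). Likewise, every product $a^{n-1}b$ is regenerated as $a^{n-2}\cdot(ab)$, provided $ab \in A$. That proviso fails in general, which is why the plan above is not yet complete.

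The fix is to remove instead only top elements that lie outside $A^{n-2}$ and whose products with $A$ are covered by other products. In a general monoid, such elements can be forced to coincide, so the $2^{|A|-1}$ count needs distinct sets; this distinctness check is the obstacle I expect, and I would handle it by removing the chosen elements only when they are not in $A^{n-2}$.
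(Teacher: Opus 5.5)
Your proposal never reaches a valid family of solutions, and the direction in which it finally searches, namely removing ``top'' elements such as $a^{n-1}$ from $A^{n-1}$, cannot work.

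The one concrete candidate, $X_S = A^{n-1} \setminus \{a^{n-1} : a \in S\}$, already fails for $H = \mathbb N$, $A = \{0,1\}$, $n = 3$ and $S = \{1\}$. Here $X_S = \{0,1\}$ and $X_S + A = \{0,1,2\} \ne \{0,1,2,3\} = A^3$, because writing $3 = x + a$ with $x \in A^2$ and $a \in A$ forces $x = 2$. In fact, in this example the only solutions $X \in \mathcal P_0(\mathbb N)$ of $X + A = A^3$ are $\{0,2\}$ and $\{0,1,2\}$. Both contain $2$, the unique element of $A^{2}$ outside $A^{n-2} = A$, and they differ precisely on the element $1 \in A$. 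So any scheme that only removes elements lying outside $A^{n-2}$, such as your proposed fix, produces at most one solution here, while $2^{|A|-1} = 2$ are needed. Independently, your $X_S$ can fall outside $\mathcal P_1(H)$, e.g.\ when $a^{n-1} = 1_H$ for some $a \in S$ in a group, and distinct $S$ need not give distinct sets, as you note.

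The missing idea is to remove elements from the bottom layer instead. For each $B \subseteq A \setminus \{1_H\}$, put $Q_B := A^{n-1} \setminus B$. Then $1_H \in Q_B$ and $Q_B A \subseteq A^n$. The sets $Q_B$ are pairwise distinct for free, since $B \subseteq A \subseteq A^{n-1}$. To show $A^n \subseteq Q_B A$, take $z \in A^n$ and let $k$ be least with $z \in A^k$.
If $k \le 1$, then $z \in A$ and $z = 1_H z \in Q_B A$.
If $2 \le k \le n-1$, then $z \notin A \supseteq B$, so $z \in Q_B$ and $z = z 1_H \in Q_B A$.
If $k = n$, write $z = z' a_n$ with $z' \in A^{n-1}$ and $a_n \in A$. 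Were $z' \in B \subseteq A$, we would get $z \in A^2$, contradicting $k = n \ge 3$; so $z' \in Q_B$.

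This is exactly where $n > 2$ enters. A removed $b \in B$ only contributes products $b a_n \in A^2 \subseteq A^{n-1}$, and each of these is recovered as $1_H z$ or $z 1_H$. By contrast, a product $a^{n-1} b$ of a removed top element generally has no other representation.
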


\begin{proof}
Let $B$ be a subset of $A \setminus \{1_H\}$, and define $Q := A^{n-1} \setminus B$. We claim that $QA = A^n$.
Since $QA \subseteq \allowbreak A^{n-1}A = A^n$, we only need to check that $A^n$ is contained in $QA$. To this end, fix $z \in A^n$, and let $k$ be the smallest non-negative integer such that $z \in A^k$. Our goal is to show that $z \in QA$, and we distinguish three cases (note that $0 \le k \le n$; and since $1_H \in A$, we have $A \subseteq A^2 \subseteq \cdots \subseteq A^n $):

\vskip 0.2cm

\begin{itemize}[leftmargin=0.75cm]
\item \textsc{Case 1}: $k = 0$ or $k=1$. If $k=0$, then $z \in A^0=\{1_H\}$ and thus $z = 1_H$. So, regardless of whether $k=0$ or $k=1$, we find that $z \in A$, and hence $z \in 1_H A \subseteq QA$.

\vskip 0.2cm

\item \textsc{Case 2}: $2 \le k < n$. We have $z \notin B$, or else $z \in A$ (since $B \subseteq A$), contradicting the definition of $k$ (which guarantees that $z \notin A^i$ for every $i \in \llb 0, k-1 \rrb$). As a result, $z \in A^k \setminus B \subseteq Q$ (by the fact that $A^k \subseteq A^{n-1}$), and hence $z \in Q 1_H \subseteq QA$.

\vskip 0.2cm

\item \textsc{Case 3}: $k = n$. By the definition of $k$, we have $z \notin A^i$ for all $i \in \llb 0, n-1 \rrb$. On the other hand, $z = a_1 \cdots a_n$ for some $a_1, \ldots, a_n \in A$. It follows that $z' := a_1\cdots a_{n-1} \notin B$; otherwise, $z = z'a_n \in A^2$, contradicting $2 < 3 \le n$. Hence $z' \in A^{n-1} \setminus B$, and therefore $z = z' a_n \in QA$.
\end{itemize}
\vskip 0.1cm

Now, if $B_1$ and $B_2$ are distinct subsets of $A \setminus \{1_H\}$, then the sets $A^{n-1} \setminus B_1$ and $A^{n-1} \setminus B_2$ are also distinct. Consequently, the equation $XA = A^n$ has at least as many solutions $X$ in $\mathcal P_1(H)$ as there are subsets of $A \setminus \{1_H\}$, that is, it has at least $2^{|A|-1}$ solutions.
\end{proof}

\begin{theorem}
\label{thm:restriction-from-P1-to-Pfin1}
If $H$ and $K$ are cancellative monoids, then every isomorphism $f \colon \mathcal P_1(H) \to \mathcal P_1(K)$ maps $\mathcal P_{\fin,1}(H)$ bijectively onto $\mathcal P_{\fin,1}(K)$, and hence restricts to an isomorphism $\mathcal P_{\fin,1}(H) \to \mathcal P_{\fin,1}(K)$.
\end{theorem}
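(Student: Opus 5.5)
Characterize finiteness of $A \in \mathcal P_1(H)$ by a purely algebraic property that any isomorphism preserves: $A$ is finite if and only if, for every $n \ge 3$, the equation $XA = A^n$ has finitely many solutions $X$ over $\mathcal P_1(H)$. Once this is established, the same characterization holds in $\mathcal P_1(K)$. An isomorphism $f$ transports solutions of $XA = A^n$ bijectively to solutions of $Yf(A) = f(A)^n$, since $f(X)f(A) = f(XA) = f(A^n) = f(A)^n$ and $f^{-1}$ gives the converse. Hence $f$ and $f^{-1}$ both preserve finiteness, so $f$ maps $\mathcal P_{\fin,1}(H)$ bijectively onto $\mathcal P_{\fin,1}(K)$. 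The restriction is then automatically a homomorphism, hence an isomorphism.

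For the forward direction, let $A \in \mathcal P_1(H)$ be finite. Then $A^n$ is finite. Moreover, $A$ contains $1_H$, which is right cancellative because $H$ is cancellative; actually any element works. By Lemma~\ref{lem:AX=B_has_finitely_many_solutions}, the equation $XA = A^n$ has finitely many solutions over $\mathcal P(H)$, and a fortiori over the subset $\mathcal P_1(H)$.

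For the converse, let $A$ be infinite and fix $n = 3$. The construction in the proof of Lemma~\ref{XA=A^n} applies verbatim: for every subset $B \subseteq A \setminus \{1_H\}$, the set $Q = A^{n-1} \setminus B$ contains $1_H$ and satisfies $QA = A^n$. Distinct $B$ give distinct $Q$, because $B \subseteq A \subseteq A^{n-1}$. Since $A \setminus \{1_H\}$ is infinite, this yields infinitely many solutions. Lemma~\ref{XA=A^n} is stated as a count of $2^{|A|-1}$; for infinite $A$ we only need its proof, which does not use finiteness. Alternatively, we can avoid that subtlety: choose arbitrarily large finite subsets $B$, each giving a distinct solution, so the solution set is infinite.

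The only step requiring care is confirming that the characterization is phrased purely in terms of the monoid structure of $\mathcal P_1(H)$, so that it transfers through $f$. This holds: powers, products, and the solution set of an equation are all preserved by isomorphisms. I expect no real obstacle. Cancellativity is used only on the finite side, through Lemma~\ref{lem:AX=B_has_finitely_many_solutions}.
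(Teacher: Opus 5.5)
Your proposal is correct and is essentially the paper's proof, recast as an intrinsic characterization of finiteness. Finiteness of $A$ gives finitely many solutions of $XA=A^3$ via Lemma~\ref{lem:AX=B_has_finitely_many_solutions}; this transfers through $f$; and the construction in Lemma~\ref{XA=A^n} then forces $f(A)$ to be finite, with $f^{-1}$ handling the reverse inclusion. You rightly point out that this last lemma's proof does not need $A$ finite, which the paper tacitly uses when it applies the lemma to $f(A)$. As a side remark, $1_H\in A$ already gives $X=X\{1_H\}\subseteq XA=A^n$, and the identity is cancellative in any monoid, so neither your argument nor the paper's actually needs the cancellativity hypothesis.
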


\begin{proof}
Let $A \in \mathcal P_{\fin,1}(H)$. Since $H$ is cancellative, we gather from Lemma \ref{lem:AX=B_has_finitely_many_solutions} that the equation $XA = \allowbreak A^3$ has finitely many solutions $X$ over $\mathcal P_1(H)$. Because $f$ is an isomorphism from $\mathcal P_1(H)$ to $\mathcal P_1(K)$, it follows that the equation $Yf(A)=f(A)^3$
has finitely many solutions $Y$ over $\mathcal P_1(K)$. On the other hand,
Lemma \ref{XA=A^n} guarantees that  the same equation has at least $2^{|f(A)|-1}$ solutions $Y$ over $\mathcal P_1(K)$. Consequently, $|f(A)|$ is finite, and hence $f(A) \in \allowbreak \mathcal{P}_{\mathrm{fin},1}(K)$. In other words, we have that $f[\mathcal{P}_{\fin,1}(H)]$ is contained in $\mathcal{P}_{\fin,1}(K)$, and for the reverse inclusion it is now sufficient to apply the same argument to the (functional) inverse $f^{-1}$ of $f$ and to consider that $f^{-1}$ is an isomorphism from $\mathcal P_1(K)$ to $\mathcal P_1(H)$.

In fact, since $K$ is also a cancellative monoid, we can apply the previous argument to $f^{-1}$. As a result, we find that $
f^{-1}[\mathcal{P}_{\fin,1}(K)] \subseteq \mathcal{P}_{\fin,1}(H)$, and therefore $
\mathcal{P}_{\fin,1}(K) \subseteq f[\mathcal{P}_{\fin,1}(H)]$.
\end{proof}

We continue with an analogue of Theorem~\ref{thm:restriction-from-P1-to-Pfin1} for isomorphisms between the large power semigroups of cancellative \textit{commutative} semigroups (Theorem~\ref{thm:restriction-from-P-to-Pfin}). It seems plausible that the same result could hold for cancellative semigroups in general, but proving this would likely require a totally different approach, as commutativity plays a critical role in the following preliminary lemma.

\begin{lemma}\label{at-least-|A|-solutions}
If $A$ is a non-empty subset of a cancellative commutative semigroup $H$ and $n$ is an integer greater than $1$, then the equation $A^{2n} = A^n X$ has at least $|A|$ solutions $X$ over $\mathcal P(H)$.
\end{lemma}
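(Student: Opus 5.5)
Writing the semigroup multiplicatively, the plan is to exhibit, for each $a \in A$, a solution $X_a$ of $A^{2n} = A^n X$ so that distinct elements of $A$ give distinct solutions. The natural candidates are
\[
X_a := A^n \setminus \{a^n\} \quad\text{or, more robustly,}\quad X_a := (A^n \setminus a^{n-1}A) \cup \{\text{suitable elements}\},
\]
but I would first try the simplest option: $X_a := A^n \setminus \{a^n\}$ for those $a$ for which this set is non-empty and still satisfies $A^n X_a = A^{2n}$. Clearly $A^n X_a \subseteq A^{2n}$. For the reverse inclusion, take $z = x_1\cdots x_{2n}$ with $x_i \in A$. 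Commutativity lets us regroup the factors freely, so $z = (x_{\sigma(1)}\cdots x_{\sigma(n)})(x_{\sigma(n+1)}\cdots x_{\sigma(2n)})$ for any permutation $\sigma$. We need a regrouping in which the second block is not equal to $a^n$. If every $n$-subset of the factors has product $a^n$, then all the $x_i$ are equal: swapping $x_i$ and $x_j$ between blocks and cancelling the common part gives $x_i = x_j$. Hence all the $x_i$ equal some $b$ with $b^n = a^n$. In that case $z = b^{2n} = a^{2n}$, and we must represent $a^{2n}$ differently.

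That last case is the main obstacle: $a^{2n}$ may only be representable as $a^n\cdot a^n$ (e.g.\ when $a$ is an extreme element, as in numerical semigroups with $a = \max A$). So $X_a = A^n\setminus\{a^n\}$ can fail, and I will switch to deleting a non-extreme product instead. Concretely, for $a \in A$ and $|A| \ge 2$, pick $b \in A\setminus\{a\}$ and set $X_a := A^n \setminus \{a^{n-1}b\}$. For $z \in A^{2n}$, the argument above shows that $z$ has a regrouping avoiding $a^{n-1}b$ as the second block unless all factors are equal to a single element $c$ with $c^n = a^{n-1}b$. In that exceptional case, $z = c^{2n} = c^n \cdot c^n$, and $c^n = a^{n-1}b$ would have to be removed. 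But then I would write $z = (a^{n-1}b)^2 = (a^{n-2}b^2)(a^n)$ (using $n \ge 2$), so $z = y\,x$ with $y = a^{n-2}b^2 \in A^n$ and $x = a^n \in A^n$. If $a^n \ne a^{n-1}b$, which holds by cancellativity since $a \ne b$, then $x \in X_a$ and $z \in A^n X_a$. This is where $n>1$ and cancellativity are used.

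Distinctness: $X_a$ and $X_{a'}$ differ whenever $a^{n-1}b \ne a'^{n-1}b'$. To get $|A|$ distinct sets, I would fix a single $b$ and define $X_a := A^n \setminus \{a^{n-1}b\}$ for $a \ne b$, together with one more solution $A^n$ itself (trivially $A^nA^n = A^{2n}$). Distinctness for $a \ne a'$ then needs $a^{n-1} \ne a'^{n-1}$. This holds in cancellative commutative semigroups, since they are torsion-free-like only up to units: if $a^{n-1} = a'^{n-1}$ could happen, I would instead choose the removed element as $a^{n-1}b$ with the roles arranged via cancellation, i.e.\ use $a\,b^{n-1}$, which is injective in $a$ by cancellation. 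With $X_a := A^n\setminus\{ab^{n-1}\}$ for $a\ne b$ and $X_b := A^n$, I get $|A|$ distinct sets. I would redo the exceptional-case check for this choice: the rewriting is $(ab^{n-1})^2 = (a^2b^{n-2})\,b^n$ with $b^n \ne ab^{n-1}$. Finally, the case $|A|=1$ is trivial, since $X = A^n$ is a solution. If $A$ is infinite, the same construction yields infinitely many solutions.
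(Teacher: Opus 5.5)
Your final construction is correct and is exactly the paper's proof with $x=x_1\cdots x_{n-1}$ specialized to $b^{n-1}$: fix $b\in A$, take $X_a := A^n\setminus\{ab^{n-1}\}$ for $a\in A\setminus\{b\}$ together with $A^n$ itself, and handle the only problematic product via $(ab^{n-1})^2=(a^2b^{n-2})\,b^n$, where $b^n\ne ab^{n-1}$ by cancellativity. The paper treats the generic case more directly, writing $z=uv$ with $u,v\in A^n$ and using $uv=vu$ when $u$ or $v$ differs from the deleted element, instead of your ``all factors equal'' regrouping lemma; and you should drop the abandoned remark that $a^{n-1}\ne a'^{n-1}$ for distinct $a,a'$ in every cancellative commutative semigroup, which is false (e.g.\ $1^2=(-1)^2$ in the multiplicative group $\{1,-1\}$ with $n=3$) but irrelevant once you delete $ab^{n-1}$.
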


\begin{proof}
Since $A^{2n} = A^n A^n$, the claim is tantamount to proving that there exist at least $|A|-1$ sets $X \in \allowbreak \mathcal P(H)$ with $X \ne A^n$ such that $A^{2n} = A^n X$, which is obvious if $A$ is a singleton. Consequently, assume henceforth that $|A| \ge 2$. Next, fix $x \in A^{n-1}$ and write $x$ as $x_1 \cdots x_{n-1}$, where $x_1, \ldots, x_{n-1} \in A$.

Given $a \in A \setminus \{x_1\}$, we define $X_a := A^n \setminus \{ax\}$; note that the set $A \setminus \{x_1\}$ is non-empty (by the fact that $|A| \ge 2$) and $X_a$ is a proper subset of $A^n$ (by the fact that $ax \in A^n$). We claim $A^{2n} = A^n X_a$.

It is enough to check that $A^{2n} \subseteq A^n X_a$, as the reverse inclusion is trivial. To this end, let $u, v \in A^n$. If $u \ne ax$ or $v \ne ax$, then it is clear from the commutativity of $H$ that $uv = \allowbreak vu \in A^n X_a$. Otherwise,
\begin{equation*}
\begin{split}
uv & = ax_1 \cdots x_{n-1} \cdot ax_1 \cdots x_{n-1} = a^2 x_2 \cdots x_{n-1} \cdot x_1^2 x_2 \cdots x_{n-1} \in A^n X_a,
\end{split}
\end{equation*}
where we have used in particular that (i) both $a^2 x_2 \cdots x_{n-1}$ and $x_1^2 x_2 \cdots x_{n-1}$ are products of $n$ elements of $A$, and (ii) $x_1^2 x_2 \cdots x_{n-1} = x_1 x \ne ax$, since $H$ is cancellative and $a \ne x_1$ by construction.

In summary, we have shown that, for every $a \in A \setminus \{x_1\}$, the set $X_a$ is a proper subset of $A^n$ such that $A^{2n} = A^n X_a$. Since $X_b \neq X_c$ for all distinct $b, c \in A \setminus \{x_1\}$, the proof is thus complete.
\end{proof}

\begin{theorem}
\label{thm:restriction-from-P-to-Pfin}
Let $H$ and $K$ be cancellative commutative semigroups. Every isomorphism $f \colon \mathcal P(H) \to \mathcal P(K)$ restricts to an isomorphism $\mathcal P_\fin(H) \to \mathcal P_\fin(K)$, that is, maps $\mathcal P_\fin(H)$ bijectively onto $\mathcal P_\fin(K)$.
\end{theorem}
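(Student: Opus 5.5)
We mimic the proof of Theorem~\ref{thm:restriction-from-P1-to-Pfin1}, replacing Lemma~\ref{XA=A^n} by Lemma~\ref{at-least-|A|-solutions}. The idea is to characterize finiteness of $A \in \mathcal P(H)$ purely in terms of the semigroup structure of $\mathcal P(H)$. The property we use is that the equation $A^n X = A^{2n}$ has only finitely many solutions $X$ for a fixed $n \ge 2$, say $n = 2$. Since this property is stated purely in terms of the operation of $\mathcal P(H)$, it is preserved by isomorphisms.

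First step: let $A \in \mathcal P_\fin(H)$. Then $A^2$ and $A^4$ are finite, and $A^2$ is non-empty. Because $H$ is cancellative, any element $a \in A^2$ is right cancellative. Commutativity lets us rewrite $A^2 X = A^4$ as $X A^2 = A^4$. Lemma~\ref{lem:AX=B_has_finitely_many_solutions} then shows that this equation has only finitely many solutions $X \in \mathcal P(H)$.

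Second step: since $f$ is an isomorphism, $X \mapsto f(X)$ is a bijection between the solutions of $A^2 X = A^4$ over $\mathcal P(H)$ and the solutions of $f(A)^2 Y = f(A)^4$ over $\mathcal P(K)$. Indeed, $f(A^2X) = f(A)^2 f(X)$ and $f(A^4) = f(A)^4$, and every $Y$ is of the form $f(X)$. So the second equation also has finitely many solutions. Since $K$ is cancellative and commutative, Lemma~\ref{at-least-|A|-solutions} with $n=2$ says it has at least $|f(A)|$ solutions. Hence $|f(A)|$ is finite, i.e.\ $f(A) \in \mathcal P_\fin(K)$. Applying the same argument to $f^{-1}$, which is an isomorphism $\mathcal P(K) \to \mathcal P(H)$ between power semigroups of cancellative commutative semigroups, gives $f^{-1}[\mathcal P_\fin(K)] \subseteq \mathcal P_\fin(H)$. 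This yields the bijection, and the restriction is automatically a homomorphism, hence an isomorphism.

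The only point needing care is the counting in Lemma~\ref{at-least-|A|-solutions} when $A$ is infinite. "At least $|A|$ solutions" must be read as infinitely many in that case. This holds because the proof produces pairwise distinct sets $X_a$, one for each $a \in A \setminus \{x_1\}$, together with $A^n$ itself, and this family is infinite when $A$ is. There is no genuine obstacle beyond checking the bijective correspondence of solution sets under $f$.
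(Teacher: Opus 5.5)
Your proof is correct and is essentially the paper's own argument: the equation $A^2X = A^4$ has finitely many solutions by Lemma~\ref{lem:AX=B_has_finitely_many_solutions} (using commutativity to put $A^2$ on the right), this transfers through $f$, Lemma~\ref{at-least-|A|-solutions} with $n=2$ gives the lower bound of $|f(A)|$ solutions, and the symmetric argument for $f^{-1}$ gives surjectivity. Your remark that this lower bound must be read as ``infinitely many'' when $f(A)$ is infinite, justified by the pairwise distinct sets $X_a$, is a useful clarification that the paper leaves implicit.
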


\begin{proof}
Let $A \in \mathcal P_\fin(H)$. Similarly as in the proof of Theorem \ref{thm:restriction-from-P1-to-Pfin1}, it suffices to verify that $f(A)$ is finite. To this end, we have from Lemma \ref{lem:AX=B_has_finitely_many_solutions} and the cancellativity of $H$ that the equation $A^2X=A^4$ has finitely many solutions $X$ over $\mathcal P(H)$.
Since $f$ is a bijective homomorphism from $\mathcal P(H)$ to $\mathcal P(K)$, it follows that the equation $   
f(A)^2Y=f(A)^4$
has finitely many solutions $Y$ over $\mathcal P(K)$. On the other hand, we get from Lemma \ref{at-least-|A|-solutions} and the hypotheses on $K$ that the same equation has at least $|f(A)|$ solutions $Y$ over $\mathcal P(K)$. Therefore, $|f(A)|$ is finite, and we are done.
\end{proof}

We conclude the section by establishing sufficient conditions under which an automorphism of $\mathcal P(H)$ or $\mathcal P_1(H)$, for an arbitrary monoid $H$, is trivial. 
We begin with a simple but crucial observation that allows us 
to convert a (purely) set-theoretic condition into an algebraic one.

\begin{remark}
\label{rem:algebraic-characterization-of-membership-for-idemps-containing-identity}
Let $H$ be a monoid and $E$ be a subset of $H$ such that 
$1_H \in E = E^2$. If an element $y \in H$ lies in $E$, then $E \subseteq \{1_H, y\}E \subseteq E^2 = E$. 
Conversely, if $\{1_H, y\} E = E$, then $y = y 1_H \in \{1_H, y\}E = E$. 
By symmetry, it follows that $y \in E$ if and only if $\{1_H, y\}E = E$, if and only if $E\{1_H, y\} = E$.
\end{remark}

The point of Remark \ref{rem:algebraic-characterization-of-membership-for-idemps-containing-identity}
is that, to the contrary of (certain) algebraic conditions, set-theoretic properties such as inclusion or element membership are, in general, 
not preserved by the automorphisms of a power monoid. We will immediately see this idea 
at work in the following:

\begin{proposition}\label{prop:equalizing-principle}
Let $H$ be a monoid, and let $\mathcal D$ be a submonoid of $\mathcal P(H)$ such that
\begin{enumerate}[label=(c\arabic{*})]
\item\label{prop:equalizing-principle(1)} $\{1_H, x\} \in \mathcal D$ for all $x \in H$, and
\item\label{prop:equalizing-principle(2)} every idempotent of $\mathcal D$ contains the identity $1_H$ of $H$.
\end{enumerate}
If an automorphism $f$ of $\mathcal D$ fixes each set $\{1_H, x\} \subseteq H$, 
then it fixes all the idempotents of $\mathcal D$.
\end{proposition}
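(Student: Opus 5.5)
Let $E$ be an idempotent of $\mathcal D$ and set $F := f(E)$. Since $f$ is an automorphism, $F^2 = f(E^2) = f(E) = F$, so $F$ is again an idempotent of $\mathcal D$. By condition \ref{prop:equalizing-principle(2)}, both $E$ and $F$ contain $1_H$. We are therefore in the setting of Remark~\ref{rem:algebraic-characterization-of-membership-for-idemps-containing-identity}, which lets us turn membership in $E$ or $F$ into an algebraic equation in $\mathcal D$. Condition \ref{prop:equalizing-principle(1)} guarantees that every set $\{1_H, y\}$ appearing in that equation lies in $\mathcal D$, so the automorphism $f$ can be applied to it.

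The plan is to show $E \subseteq F$ and $F \subseteq E$ by transporting these equations through $f$. Fix $y \in E$. By Remark~\ref{rem:algebraic-characterization-of-membership-for-idemps-containing-identity}, $\{1_H, y\}E = E$. Applying $f$ and using that $f$ fixes $\{1_H, y\}$, we get
\[
\{1_H, y\}F = f(\{1_H,y\})\,f(E) = f(\{1_H, y\}E) = f(E) = F.
\]
Since $1_H \in F = F^2$, the same remark gives $y \in F$, so $E \subseteq F$.

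For the reverse inclusion, fix $y \in F$. Then $\{1_H, y\}F = F$. Because $f$ fixes $\{1_H,y\}$, so does $f^{-1}$, and applying $f^{-1}$ yields $\{1_H, y\}E = E$, hence $y \in E$. Thus $E = F = f(E)$, as claimed.

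There is no real obstacle here. The only points to check are that $f$ maps idempotents to idempotents, so that \ref{prop:equalizing-principle(2)} applies to $f(E)$ and not just to $E$, and that $f^{-1}$ inherits the fixing property. Both are immediate from $f$ being a bijective homomorphism of $\mathcal D$.
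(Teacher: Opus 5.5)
Your proof is correct and follows essentially the same route as the paper. Both arguments use the Remark to rewrite $y \in E$ as $\{1_H, y\}E = E$, transport this equation through $f$, and apply (c2) to the idempotent $f(E)$. The only cosmetic difference is in the reverse inclusion: you apply $f^{-1}$, whereas the paper writes a single chain of equivalences that relies on the injectivity of $f$.
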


\begin{proof}
Let $E$ be an idempotent of $\mathcal D$, and let $f$ be an automorphism of $\mathcal D$ 
such that $f(\{1_H, x\}) = \{1_H, x\}$ for every $x \in H$. We need to prove that $f(E) = E$. To this end, let $x$ be an arbitrary element of $H$. By 
Remark~\ref{rem:algebraic-characterization-of-membership-for-idemps-containing-identity}
and the hypotheses on $f$, we have that
\begin{equation}
\label{prop:equalizing-principle:equ(1)}
x \in E 
\quad\text{if and only if}\quad
\{1_H, x\}E = E, 
\quad\text{if and only if}\quad
\{1_H, x\}f(E)=f(E).
\end{equation}
On the other hand, $f(E)$ is itself an idempotent of $\mathcal D$, 
since semigroup homomorphisms are idempotent-preserving. 
By Condition~\ref{prop:equalizing-principle(2)}, it follows that $f(E)$ contains $1_H$. 
Hence,  
we conclude from Eq.~\eqref{prop:equalizing-principle:equ(1)} and 
Remark~\ref{rem:algebraic-characterization-of-membership-for-idemps-containing-identity} that $x \in E$ if and only if $x \in f(E)$, which in turn shows that $f(E) = E$.
\end{proof} 
\section{The automorphisms of \texorpdfstring{$\mathcal P_0(H)$}{P0(H)} when \texorpdfstring{$H$}{H} is a numerical monoid} 
\label{sec:03}

In this short section, we prove the first main result of the paper.
Before turning to the proof, we need two more lemmas, the first of which
relies, via the work of Tringali and Yan \cite{Tri-Yan2025(a)}, on a classical theorem of
Nathanson \cite{Nat78}, sometimes referred to as the Fundamental Theorem of Additive
Number Theory.

\begin{lemma}\label{(0,x) fixed}
If $H$ is a numerical monoid and $f$ is an automorphism of $\mathcal P_0(H)$, then $\{0,x\}$ is a fixed point of $f$ for every $x \in H$.  
\end{lemma}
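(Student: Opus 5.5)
Since $H$ is a numerical monoid, it is in particular cancellative, so Theorem~\ref{thm:restriction-from-P1-to-Pfin1} applies to $f$. It shows that $f$ restricts to an automorphism $g$ of the reduced finitary power monoid $\mathcal P_{\fin,0}(H)$. Any set $\{0,x\}$ with $x \in H$ lies in $\mathcal P_{\fin,0}(H)$, so $f(\{0,x\}) = g(\{0,x\})$. The statement therefore reduces to a purely finitary one: every automorphism of $\mathcal P_{\fin,0}(H)$ fixes each two-element set $\{0,x\}$. The case $x = 0$ is trivial, because $\{0\}$ is the identity and automorphisms preserve it.

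For the finitary statement I would follow the approach of Tringali and Yan~\cite{Tri-Yan2025(a)}, as the paper's remark before the lemma suggests. The plan has three steps.

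\textbf{Step 1 (atoms are preserved).} Two-element sets $\{0,x\}$ with $x \ne 0$ cannot be written as a sum of two non-identity elements of $\mathcal P_{\fin,0}(H)$. They are atoms, and in fact a sum of two non-trivial elements has at least three elements. Atoms are defined algebraically, so $g$ permutes them. The delicate point is that there are atoms of every size, so $g(\{0,x\}) = \{0,y\}$ does not follow from this alone.

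\textbf{Step 2 (two-element sets are characterized algebraically).} I would use Nathanson's structure theorem \cite{Nat78}. For a finite $A \ni 0$ with $\gcd(A)=d$ and maximum $m$, the sizes $|nA|$ grow eventually linearly with slope $m/d$. Moreover, $nA$ is an interval in $d\mathbb N$ apart from bounded exceptional sets at the two ends. For $A=\{0,x\}$ we have $|nA| = n+1$, which is the minimal possible growth among non-trivial sets. I would express ``$|nA|$ grows with slope $1$'' through algebraic data: for example, the number of factorizations of $nA$, or the number of solutions $X$ of equations $nA = X + A$, as in Lemma~\ref{XA=A^n}. This should single out the sets of the form $\{0,x\}$ together with a controlled family of other candidates. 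Transporting along $g$ then gives $g(\{0,x\}) = \{0,\sigma(x)\}$ for some bijection $\sigma$ of $H\setminus\{0\}$.

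\textbf{Step 3 ($\sigma$ is the identity).} Relations among two-element sets are preserved by $g$. For instance, $\{0,x\}+\{0,y\}$ has exactly three elements if and only if $y \in \{x, 2x\}$ or $x = 2y$. Another usable relation is $\{0,x\}+\{0,x\}=\{0,x,2x\}$, together with the divisibility structure of such sums. From these, $\sigma$ respects multiples and order. Since $H$ is cofinite with $\gcd$ equal to $1$, comparing behaviour on large consecutive integers should force $\sigma = \mathrm{id}$.

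I expect Step 2 to be the main obstacle. Turning Nathanson's asymptotic description into an invariant preserved by $g$ requires care. In the general numerical-monoid setting one must also handle the fact that $H \ne \mathbb N$: only the elements of $H$ are available, whereas for $\mathbb N$ reflection-type maps exist. If the direct characterization proves too hard, I would fall back on citing the corresponding result for $\mathcal P_{\fin,0}(H)$ from \cite{Tri-Yan2025(a)} or \cite{Wong2025} once the reduction is done.
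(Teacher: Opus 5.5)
Your opening reduction is exactly the paper's first step: $H$ is cancellative, so Theorem~\ref{thm:restriction-from-P1-to-Pfin1} restricts $f$ to an automorphism of $\mathcal P_{\fin,0}(H)$. The gap is that the finitary statement, which is the whole content of the lemma, is never established.

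Step~2 is explicitly left open. No algebraic invariant is actually shown to single out the sets $\{0,x\}$; ``should single out \dots\ together with a controlled family of other candidates'' is a hope, not an argument.

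Step~3 fails as written. For $x,y>0$ the set $\{0,x\}+\{0,y\}=\{0,x,y,x+y\}$ has three elements if and only if $x=y$; for example, $\{0,1\}+\{0,2\}=\{0,1,2,3\}$. More fundamentally, cardinality is not preserved by automorphisms of a power monoid, so it cannot be transported along $g$. Only identities in the monoid can be transported. For instance, $2\{0,x\}+\{0,2x\}=4\{0,x\}$ does force $\sigma(2x)=2\sigma(x)$. But you never prove that $\sigma$ is additive, and ``comparing behaviour on large consecutive integers'' needs exactly that.

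The paper finishes with two precise inputs, and your fallback names only a vague form of the first.
\begin{enumerate}
\item[(i)] \cite[Lemma~2.4]{Tri-Yan2025(a)}, which is where Nathanson's theorem enters, yields an automorphism $g$ of $H$ with $f(\{0,x\})=\{0,g(x)\}$ for all $x\in H$. This is a twist by $g$, not fixedness outright. Note that \cite{Wong2025} is not the right source here, since it concerns $\mathcal P_\fin(S)$ rather than the reduced monoid.
\item[(ii)] $\aut(H)$ is trivial for a numerical monoid \cite[Theorem~3]{Higg-1969}. Elementarily, $g$ extends to a group endomorphism $n\mapsto cn$ of $\mathbb Z$, the group generated by $H$. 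Then $g(H)=H$, with $H$ cofinite in $\mathbb N$, forces $c=1$.
\end{enumerate}
Your proposal never states (ii), and without it, citing (i) only gives $f(\{0,x\})=\{0,g(x)\}$.

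Incidentally, the reflection $X\mapsto\max X-X$ of $\mathcal P_{\fin,0}(\mathbb N)$ fixes every $\{0,x\}$, so it is no obstruction to the lemma.
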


\begin{proof}
Since numerical monoids are cancellative, Theorem \ref{thm:restriction-from-P1-to-Pfin1} guarantees that $f$ restricts to an automorphism of $\mathcal P_{\fin,0}(H)$. By \cite[Lemma 2.4]{Tri-Yan2025(a)}, this implies the
existence of an automorphism $g$ of $H$ such that $f(\{0, x\}) = \{0, g(x)\}$ for
all $x \in \allowbreak H$. However, it is folklore that the automorphism group of a numerical semigroup is trivial \cite[Theorem 3]{Higg-1969}. Hence, $g$ is the identity map on $H$, which in turn implies that
$\{0,x\}$ is a fixed point of $f$ for each $x \in H$.
\end{proof}

\begin{lemma}\label{H_y+X=H_y*+X}
Let $H$ be a numerical monoid, $X$ be a subset of $H$ containing $0$,
and $y$ be a non-zero element of $H$. Then $y \in X$ if and only if
$H_y + X = H_y^\ast + X$, where
\begin{equation}
\label{sec_01:eq(2)}
H_y := \{0\} \cup (H \cap \mathbb Z_{\ge y})
\quad\text{and}\quad
H_y^\ast := H_y \setminus \{y\}.
\end{equation}
\end{lemma}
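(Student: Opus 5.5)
We prove both directions directly from the definitions, using only that $X$ contains $0$ and that $H_y \setminus \{0\}$ is the tail $H \cap \mathbb Z_{\ge y}$.

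\emph{Forward direction.} Suppose $y \in X$. Since $H_y^\ast \subseteq H_y$, we always have $H_y^\ast + X \subseteq H_y + X$. For the reverse inclusion, fix $h + x$ with $h \in H_y$ and $x \in X$. If $h \ne y$, then $h \in H_y^\ast$ and there is nothing to show. If $h = y$, write $y + x = 0 + (y + x)$. This does not directly help, because $y+x$ need not lie in $X$. Instead, use that $y \in X$ to write $y + x = x + y$ with $x$ in the role of the summand from the first set. Two cases arise.
\begin{itemize}[leftmargin=0.75cm]
\item If $x \in H_y^\ast$, then $y + x = x + y \in H_y^\ast + X$.
\item If $x \notin H_y^\ast$, then either $x = 0$ or $x = y$, or else $x$ lies in $H$ with $0 < x < y$.
\end{itemize}
The case $x = 0$ is easy: $y + 0 = 0 + y \in H_y^\ast + X$, because $0 \in H_y^\ast$ (recall $y \ne 0$) and $y \in X$. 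The case $x = y$ gives $2y$, and $2y \in H_y^\ast$ because $2y > y$ and $2y \in H$; hence $2y = 2y + 0 \in H_y^\ast + X$. If $0 < x < y$, then $y + x > y$ and $y + x \in H$, so $y + x \in H_y^\ast$ and $y + x = (y+x) + 0 \in H_y^\ast + X$. In fact this last observation settles every case with $x \ne 0$ at once: whenever $x \neq 0$, the element $y + x$ lies in $H \cap \mathbb Z_{> y} \subseteq H_y^\ast$, and $0 \in X$. So only $x = 0$ needs the hypothesis $y \in X$.

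\emph{Backward direction.} Suppose $H_y + X = H_y^\ast + X$. Since $y = y + 0 \in H_y + X$, we can write $y = h + x$ with $h \in H_y^\ast$ and $x \in X$. Every non-zero element of $H_y^\ast$ is strictly greater than $y$, and $x \ge 0$. So $h \ne 0$ would force $h + x > y$, which is impossible. Therefore $h = 0$ and $x = y$, giving $y \in X$.

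The only point requiring care is the $h = y$ term in the forward direction. There the key observation is that adding anything non-zero pushes past $y$ into $H_y^\ast$, while the single exceptional sum $y + 0$ is recovered as $0 + y$ using $y \in X$. No earlier results of the paper are needed.
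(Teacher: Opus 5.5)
Your proof is correct and follows the paper's argument. The backward direction is identical, and your final consolidated forward argument (recover $y+0$ as $0+y$ using $y\in X$, while $y+x\in H_y^\ast$ whenever $x\neq 0$) is exactly the paper's two-case check that $y+X\subseteq H_y^\ast+X$; the earlier split on whether $x\in H_y^\ast$ is redundant, since $0\in H_y^\ast$ always holds, and could simply be dropped.
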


\begin{proof}
Suppose first that $H_y + X = H_y^\ast + X$. From the fact that $y \in H_y$ and $0 \in X$, it is clear that $y \in \allowbreak H_y + X \allowbreak = H_y^\ast + \allowbreak X$.
Consequently, $
y = h + x$ for some $h \in H_y^\ast$ and $x \in X$. If $h \neq 0$, then $h > y$ and hence $
y = h + x > y$, which is impossible. It follows that $h = 0$, and thus $y = x \in X$.

Conversely, assume $y \in X$. Since $
H_y + X = (H_y^\ast + X) \cup (y + X)$,
it suffices to check that $y + X \subseteq \allowbreak H_y^\ast \allowbreak + X$. To this end, fix $x \in X$. We need to show that $y + x \in H_y^\ast + X$. If $x = 0$, then $y \in X \subseteq X + H_y^\ast$ (by the fact that $0 \in H_y^\ast$). Otherwise, $
y < y + x \in H_y^\ast \subseteq H_y^\ast + X$, as $H_y^\ast$ contains every element of $H$ larger than $y$ (and $X$ contains $0$). In either case, we are done.
\end{proof}

\begin{theorem}
\label{thm:aut(P_0(H))}
$\aut(\mathcal P_0(H))$ is trivial for every numerical monoid $H$.
\end{theorem}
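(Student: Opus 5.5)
Let $f$ be an automorphism of $\mathcal P_0(H)$. The plan is to show $f(X) = X$ for every $X \in \mathcal P_0(H)$ by recovering membership of each $y \in H$ in $X$ through a purely algebraic identity that $f$ preserves. By Lemma~\ref{(0,x) fixed}, $f$ fixes every two-element set $\{0,x\}$ with $x \in H$ (and trivially $\{0\}$, the identity). The monoid $\mathcal D := \mathcal P_0(H)$ satisfies conditions \ref{prop:equalizing-principle(1)} and \ref{prop:equalizing-principle(2)} of Proposition~\ref{prop:equalizing-principle}: every $\{0,x\}$ lies in $\mathcal D$, and every member of $\mathcal D$ contains $0$ by definition. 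Hence Proposition~\ref{prop:equalizing-principle} gives that $f$ fixes all idempotents of $\mathcal P_0(H)$.

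Next I observe that, for each non-zero $y \in H$, both sets $H_y$ and $H_y^\ast$ from Lemma~\ref{H_y+X=H_y*+X} are idempotents of $\mathcal P_0(H)$. Both contain $0$, so $E \subseteq E+E$. Conversely, the sum of two non-zero elements of $H_y$ (resp.\ $H_y^\ast$) is an element of $H$ that is at least $2y > y$, hence lies in $H \cap \mathbb Z_{\ge y}$ and is different from $y$. Therefore $H_y + H_y = H_y$ and $H_y^\ast + H_y^\ast = H_y^\ast$. It follows that $f(H_y) = H_y$ and $f(H_y^\ast) = H_y^\ast$.

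Now fix $X \in \mathcal P_0(H)$ and a non-zero $y \in H$. By Lemma~\ref{H_y+X=H_y*+X},
\[
y \in X \iff H_y + X = H_y^\ast + X .
\]
Since $f$ is a bijective homomorphism fixing $H_y$ and $H_y^\ast$, the equality $H_y + X = H_y^\ast + X$ holds if and only if $H_y + f(X) = H_y^\ast + f(X)$. Because $f(X)$ also contains $0$, a second application of Lemma~\ref{H_y+X=H_y*+X} shows this is equivalent to $y \in f(X)$. Both $X$ and $f(X)$ contain $0$, so $X$ and $f(X)$ have the same elements, that is, $f(X) = X$.

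The only real obstacle is already absorbed by Lemma~\ref{(0,x) fixed}, which depends on the restriction Theorem~\ref{thm:restriction-from-P1-to-Pfin1} and on external results. What remains is to check carefully that $H_y$ and $H_y^\ast$ are genuinely idempotent, in particular that $y$ is never a sum of two non-zero elements of $H_y^\ast$. This holds because every such element exceeds $y$.
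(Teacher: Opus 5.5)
Your proposal is correct and follows essentially the same route as the paper: you fix the sets $\{0,x\}$ via Lemma~\ref{(0,x) fixed}, obtain that all idempotents are fixed from Proposition~\ref{prop:equalizing-principle}, and then transfer membership through the idempotents $H_y$, $H_y^\ast$ using Lemma~\ref{H_y+X=H_y*+X}. Your explicit check that $H_y$ and $H_y^\ast$ are idempotent, and your remark that $y=0$ is handled because $0 \in X \cap f(X)$, are details the paper leaves implicit.
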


\begin{proof}
Our goal is to show that the only automorphism $f$ of $\mathcal P_0(H)$ is the identity map. To start with, we know from Lemma \ref{(0,x) fixed} that $\{0,x\}$ is fixed by $f$ for all $x \in H$.
Together with Proposition \ref{prop:equalizing-principle}, this implies in turn that $f(E) = E$ for each idempotent $E \in \mathcal P_0(H)$.

Now, let $y \in H$ and $X \in \mathcal P_0(H)$ with $y \ne 0$. By Lemma \ref{H_y+X=H_y*+X}, we have that $y \in X$ if and only if $H_y + \allowbreak X = \allowbreak H_y^\ast + X$, where $H_y$ and $H_y^\ast$ are defined as in Eq.~\eqref{sec_01:eq(2)}.  
Since $f$ is a bijective endomorphism of $\mathcal P_0(H)$ and each of $H_y$ and $H_y^\ast$ is easily seen to be an idempotent of $\mathcal P_0(H)$, it follows by a further application of Lemma \ref{H_y+X=H_y*+X} that $y \in X$ if and only if $
H_y + f(X) = H_y^\ast + f(X)$, if and only if $y \in f(X)$. 

In other words, $f(X) = X$ for every $X \in \mathcal P_0(H)$, and hence $f$ is the identity map on $\mathcal P_0(H)$.
\end{proof}

\section{The automorphisms of \texorpdfstring{$\mathcal P(S)$}{P(S)} when \texorpdfstring{$S$}{S} is a numerical semigroup} 
\label{sec:04}

In this final section, we prove the second main result of the paper.
To begin, we establish a couple more lemmas,
the first of which relies on a theorem of Tringali \cite[Theorem~1]{Tri-2025(c)} that applies, in particular, to the large power semigroup of any
cancellative commutative semigroup.

\begin{lemma}\label{lem:two-element-sets-are-fixed}
If $H$ is a numerical semigroup and $f$ is an automorphism of $\mathcal P(H)$, then the singleton $\{x\}$ is a fixed point of $f$ for each $x \in H$.
\end{lemma}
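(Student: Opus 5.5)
We would first show that $f$ permutes the set of singletons, and then use the structure of numerical semigroups to conclude that this permutation is the identity. The first step is where \cite[Theorem~1]{Tri-2025(c)} enters. For the large power semigroup of a cancellative commutative semigroup $H$, that result describes the cancellative elements of $\mathcal P(H)$ (or, equivalently for our purposes, the elements that are units up to translation) and shows that they are precisely the singletons $\{x\}$ with $x \in H$. Being cancellative is an algebraic property, so it is preserved by automorphisms. Hence $f$ maps the family of singletons bijectively onto itself, and there is a bijection $g \colon H \to H$ with $f(\{x\}) = \{g(x)\}$ for all $x \in H$.

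Since $\{x\}+\{y\} = \{x+y\}$ and $f$ is a homomorphism, we get $g(x+y) = g(x)+g(y)$. So $g$ is an automorphism of the numerical semigroup $H$. By the folklore fact already used in Lemma~\ref{(0,x) fixed}, namely \cite[Theorem 3]{Higg-1969}, the automorphism group of a numerical semigroup is trivial. For completeness we can also argue directly. Let $m$ and $m+1$ be consecutive elements of $H$ beyond the Frobenius number. Additivity forces $g$ to extend to an order-preserving $\mathbb Q$-linear map on the group $\mathbb Z$ generated by $H$, that is, multiplication by some rational $c>0$. Because $g$ is a bijection of $H$, and $H$ is cofinite with minimum $\min H$, we must have $c = 1$. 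Hence $g$ is the identity, and $f(\{x\}) = \{x\}$ for every $x \in H$.

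The main obstacle is the first step: making sure that \cite[Theorem~1]{Tri-2025(c)} really gives an algebraic characterization of singletons in $\mathcal P(H)$ when $H$ is merely a semigroup, possibly without identity. Cancellativity of $\{x\}$ is clear, because $H$ is cancellative. If the cited theorem does not directly cover the converse, namely that a non-singleton $A$ is not cancellative, we would prove it by hand. Let $A$ have at least two elements and put $a = \min A$ (which exists since $A \subseteq \mathbb N$). Then $A + H_{\ge N} = A + (H_{\ge N} \cup \{N'\})$ for a suitable threshold $N$ and a point $N' \notin H_{\ge N}$ chosen using $\max$ or cofiniteness arguments. For example, take $X = H \cap \mathbb Z_{\ge N}$ with $N$ large, and $X' = X \cup \{t\}$ with $t = N - d$, where $d > 0$ is a difference of two elements of $A$ and $t \in H$. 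Then $A + X = A + X'$ while $X \ne X'$, so $A$ fails cancellativity.
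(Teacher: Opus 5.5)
Your main line is the paper's own argument. You use \cite{Tri-2025(c)} to get an automorphism $g$ of $H$ with $f(\{x\}) = \{g(x)\}$, and then the triviality of $\aut(H)$ \cite[Theorem~3]{Higg-1969} to conclude. Two small points of attribution: the characterization of cancellative elements of $\mathcal P(H)$ as singletons is \cite[Proposition~1]{Tri-2025(c)}, which the paper uses in Claim~1 of Theorem~\ref{thm:aut(P(S))-is-trivial}, while \cite[Theorem~1]{Tri-2025(c)} gives directly that $f$ maps singletons onto singletons. Either way the cited work covers the converse you were worried about, so your proof stands. However, the by-hand fallback you offer for that converse is false and should be dropped or repaired.

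Here is why it fails. Take $X = H \cap \mathbb Z_{\ge N} = \mathbb Z_{\ge N}$, with $N$ above the Frobenius number. Then $A + X = \mathbb Z_{\ge \min A + N}$, whereas $A + X'$ contains $\min A + (N-d) < \min A + N$, so $A + X' \ne A + X$. In fact no construction of this shape can work. Adjoining any $t \in H \setminus X$ to $\mathbb Z_{\ge N}$ means $t < N$, so it always creates the new element $\min A + t$.

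The fix is to \emph{remove} a point instead of adding one. Pick $a \in A$ with $a > \min A$, set $d := a - \min A > 0$, and let $X' := \mathbb Z_{\ge N} \setminus \{N+d\}$. Then $\min A + X'$ is all of $\mathbb Z_{\ge \min A + N}$ except $\min A + N + d = a + N$, and $a + N \in A + X'$ because $N \in X'$. Hence $A + X' = A + X$ with $X' \ne X$, so $A$ is not cancellative. Lemma~\ref{at-least-|A|-solutions} gives the same conclusion for any cancellative commutative semigroup: it shows $A^n$, and hence $A$, is not cancellative when $|A| \ge 2$.

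Your direct proof that $\aut(H)$ is trivial also needs a small correction. The extension of $g$ to $\mathbb Z$ is multiplication by the integer $c = g(m+1) - g(m)$, not merely a rational number. The appeal to $\min H$ says nothing when $0 \in H$, since then $c \cdot 0 = 0$. Argue instead that $c \ge 1$, because $g$ is injective with values in $\mathbb N$. Then $cH = g(H) = H$ contains the consecutive integers $m$ and $m+1$, so $c$ divides $1$ and $c = 1$.
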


\begin{proof}
Numerical semigroups are cancellative and commutative. Hence, we gather from \cite[Theorem~1]{Tri-2025(c)} that 
$f$ maps one-element sets bijectively onto one-element sets, that is, there exists an automorphism 
$g$ of $H$ such that $f(\{x\}) = \{g(x)\}$ for all $x \in H$. Since the automorphism group of $H$ 
is trivial (as already noted in the proof of Lemma \ref{(0,x) fixed}), it follows that every one-element subset of $H$ is fixed by $f$.
\end{proof}

\begin{lemma}\label{lem:minima-are-preserved}
Let $H$ be a numerical semigroup. 
If $f$ is an automorphism of $\mathcal P(H)$, then 
\[
\min f(X) = \min X,
\qquad\text{for every }
X \in \mathcal P(H).
\]
\end{lemma}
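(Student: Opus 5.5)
The plan is to characterise $\min X$ algebraically, using only Lemma~\ref{lem:two-element-sets-are-fixed} (that $f$ fixes every singleton) and the fact that taking minima behaves well under sumsets in $\mathbb N$. Let $F$ be the Frobenius number of $H$, and put $c := \max(F+1,1)$ and $T := \mathbb Z_{\ge c}$. Then $T \subseteq H$, so $T \in \mathcal P(H)$.

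\emph{Step 1: an absorbing identity for $T$.} For every $X \in \mathcal P(H)$ we have
\[
X + T = \{\min X\} + T = \mathbb Z_{\ge \min X + c}.
\]
Indeed, $\{\min X\} + T = \mathbb Z_{\ge \min X + c}$ is contained in $X+T$. Conversely, every element of $X + T$ is at least $\min X + c$, which gives the other inclusion.

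\emph{Step 2: transport through $f$.} Set $U := f(T) \in \mathcal P(H)$, and fix $X \in \mathcal P(H)$ with $m := \min X$. Apply $f$ to the identity of Step 1 and use that $f(\{m\}) = \{m\}$ by Lemma~\ref{lem:two-element-sets-are-fixed}. This yields
\[
f(X) + U = f(X+T) = f(\{m\}+T) = \{m\} + U.
\]

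\emph{Step 3: compare minima.} In $\mathbb N$ the minimum of a sumset of non-empty sets is the sum of the minima. Since $U$ is a non-empty subset of $H \subseteq \mathbb N$, its minimum exists. Taking minima on both sides of Step 2 gives $\min f(X) + \min U = m + \min U$. Hence $\min f(X) = m = \min X$, which is the claim.

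I do not expect a serious obstacle here. The only point that needs care is choosing an \emph{absorbing} set $T$ whose defining identity $X + T = \{\min X\} + T$ is purely algebraic, so that it survives the application of $f$ even though we know nothing about $f(T)$ itself. This is why the argument never needs $f(T) = T$: only the singleton $\{m\}$ has to be fixed, and that is supplied by Lemma~\ref{lem:two-element-sets-are-fixed}.
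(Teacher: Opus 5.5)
Your proof is correct and is essentially the paper's own argument: the paper also takes $\mathbb Z_{\ge k}$ with $k$ larger than the Frobenius number, applies $f$ to the identity $\mathbb Z_{\ge k} + X = \mathbb Z_{\ge k} + \min X$ using Lemma~\ref{lem:two-element-sets-are-fixed}, and then compares minima. Your choice $c = \max(F+1,1)$ is just a harmless variant of the paper's choice of $k$.
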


\begin{proof}
Let $k$ be an integer larger than the Frobenius number of $H$. Given $X \in \mathcal P(H)$, it is clear that
\begin{equation}
\label{equ:identity-with-semilines}
\mathbb Z_{\ge k} + X = \mathbb Z_{\ge k} + \min X = 
\mathbb Z_{\ge k + \min X}.
\end{equation}
Furthermore, the set $\mathbb Z_{\ge k}$ is an element of $\mathcal P(H)$, 
and Lemma \ref{lem:two-element-sets-are-fixed} ensures that the singleton
$\{\min X\}$ is a fixed point of $f$. Therefore, applying $f$ to 
Eq.~\eqref{equ:identity-with-semilines}, we obtain that
$$
f(\mathbb Z_{\ge k}) + f(X) = f(\mathbb Z_{\ge k}) + f(\{\min X\}) = f(\mathbb Z_{\ge k}) + \min X.
$$
This implies $\min f(\mathbb Z_{\ge k}) + \min f(X) = \min f(\mathbb Z_{\ge k}) + \min X$, and hence $\min f(X) = \min X$.
\end{proof}

The last ingredient we need is provided by the definition of a certain
(binary) relation that makes it possible to link the automorphism group
of the large power semigroup of a numerical semigroup
to that of the reduced large power monoid $\mathcal P_0(\mathbb N)$ of the
numerical monoid $\mathbb N$.

More precisely, let $Q$ be a \textit{commutative} semigroup (written
multiplicatively), and let $T$ be a subsemigroup of $Q$. Given $a,b \in Q$,
we say that $a$ is a \textsf{$T$-conjugate} of $b$ (in $Q$) if either
$a=b$ or $ax=by$ for some $x,y \in T$. Denote by $\sim_T$ the relation on
$Q$ defined by declaring $a \sim_T b$ if and only if $a$ is a
$T$-conjugate of $b$. It is routine to check that $\sim_T$ is a semigroup
congruence on $Q$, hereby referred to as the relation of \evid{$T$-conjugacy on $Q$}. Accordingly, we call the factor semigroup $Q/{\sim_T}$ the
\textsf{$T$-conjugate quotient} of $Q$, and the set
$\{y \in Q : x \sim_T y\}$ the \textsf{$\sim_T$-class} of an element
$x \in Q$.

When $Q$ is a (commutative) monoid and $T$ is the unit group of $Q$, the
quotient $Q/{\sim_T}$ is a familiar object of study in factorization
theory \cite[p.~xiii]{Ger-Hal-2006}. Here we focus instead on the case
when $Q$ is the large power semigroup of a numerical
semigroup and $T$ is the set of its cancellative elements. (It is a basic fact that the cancellative elements of a
semigroup form a subsemigroup.)

\begin{theorem}\label{thm:aut(P(S))-is-trivial}
$\aut(\mathcal P(H))$ is trivial for every numerical semigroup $H$.
\end{theorem}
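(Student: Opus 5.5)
The plan is to reduce to Theorem~\ref{thm:aut(P_0(H))} applied to the numerical monoid $\mathbb N$, using the conjugacy relation just introduced together with Lemmas~\ref{lem:two-element-sets-are-fixed} and~\ref{lem:minima-are-preserved}. Let $Q := \mathcal P(H)$. Rather than working with all cancellative elements of $Q$, I will take $T := \{\{h\} : h \in H\}$, the subsemigroup of one-element sets. These are cancellative in $\mathcal P(H)$, and the argument then has the advantage that, by Lemma~\ref{lem:two-element-sets-are-fixed}, every automorphism $f$ of $\mathcal P(H)$ fixes $T$ pointwise.

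\emph{Step 1: describe the classes.} For $X, Y \in \mathcal P(H)$ we have $X \sim_T Y$ if and only if $X + a = Y + b$ for some $a, b \in H$ (the case $X = Y$ is covered by taking $a = b$). Equivalently, $X - \min X = Y - \min Y$. Define
\[
\pi \colon \mathcal P(H) \to \mathcal P_0(\mathbb N), \qquad X \mapsto X - \min X .
\]
This map is a homomorphism, because $\min(X+Y) = \min X + \min Y$, so $\pi(X+Y) = \pi(X) + \pi(Y)$. It is surjective: given $Z \in \mathcal P_0(\mathbb N)$, choose $m$ larger than the Frobenius number of $H$; then $m + Z \subseteq H$ and $\pi(m+Z) = Z$. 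Its fibres are exactly the $\sim_T$-classes. Hence $\pi$ induces a semigroup isomorphism $\mathcal P(H)/{\sim_T} \cong \mathcal P_0(\mathbb N)$.

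\emph{Step 2: descend $f$ to the quotient.} Let $f \in \aut(\mathcal P(H))$. Since $f$ fixes each singleton, $X + \{a\} = Y + \{b\}$ implies $f(X) + \{a\} = f(Y) + \{b\}$, so $f$ preserves $\sim_T$. The same holds for $f^{-1}$. Therefore $f$ induces an automorphism $\bar f$ of $\mathcal P(H)/{\sim_T}$, and transporting it through $\pi$ gives the automorphism of $\mathcal P_0(\mathbb N)$
\[
\pi(X) \mapsto \pi(f(X)),
\]
which is well defined by the previous sentence. Since $\mathbb N$ is a numerical monoid, Theorem~\ref{thm:aut(P_0(H))} says this induced automorphism is the identity. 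Hence $f(X) - \min f(X) = X - \min X$ for every $X \in \mathcal P(H)$.

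\emph{Step 3: conclude.} Lemma~\ref{lem:minima-are-preserved} gives $\min f(X) = \min X$. Combined with Step~2 this yields $f(X) = X$ for every $X$, so $f$ is the identity.

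The only point requiring care is Step~2, namely that $\sim_T$ is preserved by $f$ and that the map induced on $\mathcal P_0(\mathbb N)$ is a bijective homomorphism rather than merely a well-defined map. With $T$ chosen as the singletons, preservation is immediate from Lemma~\ref{lem:two-element-sets-are-fixed}. Bijectivity follows by applying the same reasoning to $f^{-1}$, whose induced map is the inverse of the one induced by $f$. The homomorphism property follows because $\pi$ is a surjective homomorphism whose fibres are the $\sim_T$-classes, so $\sim_T$ is its kernel congruence. If one instead insisted on taking $T$ to be all cancellative elements, the extra work would be to show that these are exactly the singletons, since a cancellative set with two or more elements would make the classes larger than translates. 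The choice of singletons avoids this.
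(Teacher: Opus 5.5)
Your proof is correct and follows the paper's argument essentially step for step. You identify $\mathcal P(H)/{\sim_T}$ with $\mathcal P_0(\mathbb N)$ via $X \mapsto X - \min X$, descend $f$ to the quotient using Lemma~\ref{lem:two-element-sets-are-fixed}, apply Theorem~\ref{thm:aut(P_0(H))} to $\mathbb N$, and conclude with Lemma~\ref{lem:minima-are-preserved}. The one small difference is that you take $T$ to be the singletons directly rather than all cancellative elements of $\mathcal P(H)$, which harmlessly spares you the external result that those cancellative elements are exactly the singletons.
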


\begin{proof}
Let $f$ be an automorphism of $\mathcal P(H)$ and $T$ be the set of cancellative elements of $\mathcal P(H)$, and denote by $\sim_T$ the relation of $T$-conjugacy on $\mathcal P(H)$.

\medskip

\begin{claimx}
\label{thm:aut(P(S))-is-trivial:claim_A}
The binary relation $\phi$ that maps the $\sim_T$-class $[X]$ of a non-empty subset $X$ of $H$ to $X - \allowbreak \min X$ is an iso\-mor\-phism from $\mathcal P(H)/{\sim_T}$ to $\mathcal P_0(\mathbb N)$.
\end{claimx}

\begin{proof}[Proof of Claim \ref{thm:aut(P(S))-is-trivial:claim_A}]
By \cite[Proposition~1]{Tri-2025(c)}, the cancellative elements of $\mathcal P(H)$ are precisely the sets $\{x\}$ with $x \in H$. It follows that if $X \sim_T Y$, then  $X+u=Y+v$ for some $u, v \in H$, and hence
$$
X - \min X = (X + u) - \min(X + u) = (Y+v)-\min(Y+v) = Y - \min Y.
 $$
Since $0 \in Z - \min Z \subseteq \mathbb N$ for all $Z \in \mathcal P(H)$, this shows that $\phi$ is actually a function (not just a binary relation) from $\mathcal P(H)/{\sim_T}$ to $\mathcal P_0(\mathbb N)$. In addition, we have 
\[
\phi([X]+[Y]) = \phi([X+Y]) = (X - \min X) + (Y - \min Y) = \phi([X]) + \phi([Y]), 
\]
which implies that $\phi$ is also a semigroup homomorphism. It remains to see that $\phi$ is bijective.

Indeed, if $\phi([X]) = \phi([Y])$ for some $X, Y \in \mathcal P(H)$, then $X + \min Y = Y + \min X$. Since $\min X$ and $\min Y$ are elements of $H$, it is thus clear that $[X] = [Y]$, and hence $\phi$ is injective. On the other hand, it is straightforward that $\phi$ is surjective, because for each $A \in \mathcal P_0(\mathbb N)$ we have $A + k \in \mathcal P(H)$ and $A = \phi([A + \allowbreak k])$ for every integer $k$ that is larger than the Frobenius number of $H$.
\renewcommand{\qedsymbol}{[\textit{Proof of \textsc{Claim} \ref{thm:aut(P(S))-is-trivial:claim_A}}\,]\,$\square$}
\end{proof}

\begin{claimx}
\label{thm:aut(P(S))-is-trivial:claim_B}
The binary relation $r_f$ that maps the $\sim_T$-class of a non-empty subset $X$ of $H$ to the $\sim_T$-class of $f(X)$ is an automorphism of $\mathcal P(H)/{\sim_T}$.
\end{claimx}

\begin{proof}[Proof of Claim \ref{thm:aut(P(S))-is-trivial:claim_B}] If $X \sim_T Y$, then $X + u = Y + v$ for some $u, v \in H$ and hence, by Lemma \ref{lem:two-element-sets-are-fixed}, $f(X) + \allowbreak u = f(Y) + \allowbreak v$, that is, $f(X) \sim_T f(Y)$. Similarly as in the proof of Claim \ref{thm:aut(P(S))-is-trivial:claim_A}, it follows that $r_f$ is actually a function on $\mathcal P(H)/{\sim_T}$. Moreover, $f$ being an endomorphism of $\mathcal P(H)$ implies
\[
r_f([X] + [Y]) = r_f([X+Y]) = [f(X+Y)] = [f(X) + f(Y)] = [f(X)] + [f(Y)] = r_f([X]) + r_f([Y]).
\]
That is, $r_f$ is an endomorphism of $\mathcal P(H)/{\sim_T}$. We are left to establish that $r_f$ is bijective.

We have from the above that the inverse $f^{-1}$ of $f$ induces a (well-defined) endomorphism of $\mathcal P(H)/{\sim_T}$ by mapping the $\sim_T$-class $[Z]$ of a non-empty set $Z \subseteq H$ to $[f^{-1}(Z)]$; and it is trivial to check that $r_{f^{-1}}$ is the inverse of $r_f$. Equivalently, $r_f$ is a bijection.
\renewcommand{\qedsymbol}{[\textit{Proof of \textsc{Claim} \ref{thm:aut(P(S))-is-trivial:claim_B}}\,]\,$\square$}
\end{proof}

Now, fix $X \in \mathcal P(H)$. We need to prove that $f(X)=X$.
By Claims~\ref{thm:aut(P(S))-is-trivial:claim_A}
and~\ref{thm:aut(P(S))-is-trivial:claim_B},
the composite function $\phi \circ \allowbreak r_f \circ \allowbreak \phi^{-1}$ is an automorphism
of $\mathcal P_0(\mathbb N)$. However, by
Theorem~\ref{thm:aut(P_0(H))}, the automorphism group of $\mathcal P_0(\mathbb N)$ is trivial. Consequently,
$\phi \circ r_f \circ \phi^{-1}$ is the identity map on
$\mathcal P_0(\mathbb N)$, and composing with $\phi^{-1}$ on the left
and with $\phi$ on the right yields that $r_f$ is in turn the identity map on the quotient
$\mathcal P(H)/{\sim_T}$.
It follows, by the definition of $r_f$, that $
[X]=r_f([X])=[f(X)]$,
that is,
\[
X+u=f(X)+v,
\qquad\text{for some } u, v\in H.
\]
On the other hand, Lemma~\ref{lem:minima-are-preserved} guarantees that
$\min X=\min f(X)$, which implies from the last displayed equation that
$u=v$, and hence $X=f(X)$, as desired.
\end{proof}

\section{Prospects for future research}
\label{sec:future}

We have found that the automorphism group of the reduced large power monoid of a
numerical monoid is trivial (Theorem~\ref{thm:aut(P_0(H))}), and that the same
holds for the automorphism group of the large power semigroup of a numerical
semigroup (Theorem~\ref{thm:aut(P(S))-is-trivial}). There is some evidence
that such ``rigidity phenomena'' may extend far beyond the numerical setting.
In this direction, several of the results established in Sec.~\ref{sect:02}
appear likely to play a useful role in future investigations.

More concretely, let $H$ be a semigroup, and let $\Phi$ be the function $\aut(H) \mapsto \aut(\mathcal P(H))$ that sends an automorphism $g$ of $H$ to its \evid{augmentation}, namely, the automorphism $g^\ast$ of $\mathcal P(H)$ given by
\[
g^\ast(X) := \{g(x) : x \in X\},
\qquad\text{for all }
X \in \mathcal P(H).
\]
It is easily verified that $\Phi$ is an injective group homomorphism, see, for instance, \cite[Sec.~1]{GarSan-Tri2025(a)}. However, in general, $\Phi$ need not be an isomorphism. Accordingly, we call an automorphism $f$ of $\mathcal P(H)$ \evid{inner} if it is the augmentation of an automorphism of $H$ (that is, if $f$ belongs to the image of $\Phi$), and we say that $\aut(\mathcal P(H))$ is \evid{canonically isomorphic} to $\aut(H)$ when $\Phi$ is surjective.

If, on the other hand, $H$ is a monoid, then the augmentation of an automorphism of $H$ restricts to an automorphism of $\mathcal P_1(H)$, and hence $\Phi$ induces an injective group homomorphism $\Phi_1 \colon \aut(H) \to \aut(\mathcal P_1(H))$. Again, we say that an automorphism of $\mathcal P_1(H)$ is inner if it belongs to the image of $\Phi_1$.

\begin{conjecture}\label{conj:A}
The automorphisms of the large power monoid $\mathcal P(\mathbb Z)$ of $\mathbb Z$ (the additive group of integers) are all inner, and so are the automorphisms of the reduced large power monoid $\mathcal P_0(\mathbb Z)$ of $\mathbb Z$. 
\end{conjecture}

In particular, it would follow from Conjecture \ref{conj:A} that $\aut(\mathcal P(\mathbb Z))$ and $\aut(\mathcal P_0(\mathbb Z))$ are both cyclic groups of order two, since the only non-trivial automorphism of $\mathbb Z$ is the map $x \mapsto -x$. 

There is in fact some evidence that Conjecture \ref{conj:A} holds in much greater generality for large classes of cancellative commutative monoids. A key step in this direction would be provided by the following:

\begin{conjecture}\label{conj:B}
If $H$ is a cancellative monoid, then every automorphism of $\mathcal P(H)$ maps $\mathcal P_1(H)$ bijectively onto itself and therefore restricts to an automorphism of $\mathcal P_1(H)$.
\end{conjecture}

If Conjecture~\ref{conj:B} holds, then the problem of determining the automorphism
group of $\mathcal P(H)$ for a cancellative monoid $H$ would be
naturally linked, by Theorem~\ref{thm:restriction-from-P1-to-Pfin1}, to the
problem of determining the automorphism group of $\mathcal P_{\fin,1}(H)$ and
hence to the Bienvenu--Geroldinger conjecture
\cite{Tri-Yan2025(a), Rago26, Tri-Yan2026(a)}.

\section*{Acknowledgments}

The authors were both supported by the Natural Science Foun\-da\-tion of Hebei Province through grant A2023205045. They are grateful to an anonymous referee for their careful reading and helpful comments on an earlier version of this manuscript.

\end{document}